\documentclass[12pt]{article}
\usepackage{amssymb}
\usepackage{graphicx}
\usepackage{amsmath, amsthm}
\usepackage{hyperref}
\usepackage{subfigure} 

\renewcommand{\baselinestretch}{2}


\setcounter{MaxMatrixCols}{10}

\setlength{\parskip}{.15cm}
\newtheorem{theorem}{Theorem}

\newtheorem{lem}{Lemma}
\newtheorem{cor} {Corrolary}

\begin{document}
\renewcommand{\baselinestretch}{1}
\title{Stationarity and ergodicity of vector STAR models}
\author{\textsc{Igor L. Kheifets$^{a}$} \and \textsc{Pentti J.
Saikkonen$^{b}$}
\vspace{0.008in} \and $^{a}$ITAM, Mexico City \and $^{b}$Department of
Mathematics and Statistics, University of Helsinki
}
\maketitle

\renewcommand{\baselinestretch}{1.2}
\begin{abstract} Smooth transition autoregressive models are widely used to
capture nonlinearities in univariate and multivariate time series. Existence of
stationary solution is typically assumed, implicitly or explicitly.  In this
paper we describe conditions for stationarity and ergodicity of vector STAR
models. The key condition is that the joint spectral radius of certain matrices
is below $1$. It is not sufficient to assume that  separate spectral radii are below
$1$. Our result allows to use recently introduced toolboxes from computational
mathematics to verify the stationarity and ergodicity of vector STAR models.

\textbf{Keywords}: Vector STAR model, Markov chains, Joint spectral radius, Stationarity, Mixing.

\textbf{JEL classification}: C12, C22, C52.
\end{abstract}


\renewcommand{\baselinestretch}{2}
\section{Introduction}
Consider the vector smooth transition autoregressive model (see Hubrich and
Terasvirta, 2013)
\begin{align}\label{eq:y}
y_t=
\mu_0 + \sum_{j=1}^p \Phi_j y_{t-j} 
+ G(\gamma,c;s_t) \left\{
\mu_1 + \sum_{j=1}^p \Psi_j y_{t-j} 
\right\}
+\varepsilon_t,
\end{align}
where  $y_t$ and $\varepsilon_t$ are $n\times 1$ random vectors, $\mu_0$ and
$\mu_1$ are $n\times 1$ intercept vectors, $\Phi_j$ and $\Psi_j$,
$j=1,\ldots,p$, are $n\times n$ parameter matrices.  We assume that the random
vectors $\varepsilon_t$ are i.i.d., with zero mean and any positive definite
covariance matrix and with a density bounded away from zero on compact subsets
of $R^n$.

Furthermore, the continuous function of random variable  $s_t$ and parameters
$\gamma>0$ and $c$,
$G(\gamma,c;s_t)$ takes values on $[0,1]$ and is called the transition
function.  Random variable $s_t$ is a function of $\{y_{t-j}, j=1,\ldots,p\}$.
For example, it could be a logistic function
\begin{equation}
G(\gamma,c;s_t)=\left(1+\exp(-\gamma(s_t-c))\right)^{-1},
\end{equation}
and $s_t=y_{t-j,i}$ for some $j\in\{1,\ldots,p\}$ and $i\in\{1,\ldots,n\}$.

The goal of this paper is to formulate conditions for the existence of a stationary
solution to (\ref{eq:y}).  We don't aim to cover the most general case;
instead, we provide an explicit treatment of the most popular models at the same
time trying to keep exposition simple. We start with a simple model with two
regimes and homoskedastic errors. Then we extend it to a more general situation
with several regimes and regime-dependent error covariance matrix. 

Conditions for stationarity  exist for regime switching vector error correction models, see 
Bec and Rahbek (2004), Saikkonen (2005, 2008). We are not aware of
corresponding conditions for vector STAR models.

The rest of the paper is organized as follows. Section~2 introduces the concept
of the joint spectral radius of a set of matrices and shows how it can be used.
Section~3 provides a numerical illustration. Section~4 contains our general
result. Then we conclude.

\section{Joint spectral radius}
For the model in Equation~(\ref{eq:y}) define matrices 
\begin{align*}
B_1 =
 \begin{pmatrix}
   \Phi_1 + \Psi_1 & \Phi_2 + \Psi_2 & \cdots & \Phi_{p-1} + \Psi_{p-1} & \Phi_p + \Psi_p \\
     I_n & 0  & \cdots & 0 & 0 \\
     0 & I_n  & \cdots & 0 & 0 \\
       \vdots  & \vdots  & \ddots & \vdots & \vdots \\
         0 & 0 & \cdots & I_n  & 0
          \end{pmatrix}
\end{align*}

\begin{align*}
B_2 =
 \begin{pmatrix}
   \Phi_1  & \Phi_2 & \cdots & \Phi_{p-1} & \Phi_p  \\
     I_n & 0  & \cdots & 0 & 0 \\
     0 & I_n  & \cdots & 0 & 0 \\
       \vdots  & \vdots  & \ddots & \vdots & \vdots \\
         0 & 0 & \cdots & I_n  & 0
          \end{pmatrix}
\end{align*}

The \emph{joint spectral radius} (JSR) of a finite set of square matrices $\mathcal{A}$ is defined by
\begin{equation}
\rho(\mathcal{A})=
lim\sup_{j\to\infty}\left(\sup_{A\in \mathcal{A}^j} \rho(A)\right)^{1/j},
\end{equation}
where $\mathcal{A}^j=\{A_1 A_2\ldots A_j:A_i\in\mathcal{A}, i=1,\ldots,j\}$
and $\rho(A)$ is the spectral radius of the matrix $A$, i.e.\ its largest
absolute eigenvalue. See Jungers (2009) for the survey on the JSR.  

\textbf{Assumption R$'$}
The joint spectral radius of matrices $B_1$ and $B_2$ is less than $1$, i.e.\
$\rho(\{B_1,B_2\})<1$.

Suppose that Assumption R$'$ is satisfied. Then there exists a solution to~(\ref{eq:y}), which  is 1) strictly stationary 2) second-order stationary 3)
$\beta$-mixing with geometrically decaying mixing numbers.
This statement is a special case of Theorem~\ref{thm:main} below. Indeed,
Equation~(\ref{eq:y}) is a particular case of Equation~(\ref{eq:hety}) in
Section~\ref{sec:main},
setting in the latter $g=1$ and $\Omega_0=\Omega_1$.

There are a number of methods to approximate the JSR and verify Assumption {R$'$}, see
Vankeerberghen, Hendrickx and Jungers (2014). For example, Gripenberg (1997) and
Blondel and Nesterov (2005)
describe algorithms to find an arbitrary small interval containing the JSR. 
The procedure of Blondel and Nesterov (2005) provides approximations to
$\rho(\mathcal{A})$  of relative
accuracy $1-\epsilon$ in time polynomial in $dim(B_1)^{(\ln card(\mathcal{A}))/\epsilon}$, where
$dim(B_1)$ is the size of matrix $B_1$ and 
$card(\mathcal{A})$ is the number of matrices in set $\mathcal{A}$, which are
small in a typical application in econometrics involving nonlinear dynamics.  For the model in
Equation~(\ref{eq:y}) computation is polynomial in $(np)^{(\ln 2)/\epsilon}$.    
In the next section we consider a model for the UK macroeconomic variables and
approximate the JSR using a simple method which works out of the box. For that
model, the computation is polynomial in $6^{(\ln 2)/\epsilon}$ and takes a few
seconds.  

In case of large matrices, the following
simple but useful lemmas help to reduce dimensions of matrices and bound the
JSR from below. To keep the exposition simple, we state them for sets consisting
of two matrices, although they hold for any arbitrary (finite) number of matrices.  Their proofs can be found in Protasov (1996), Blondel and Nesterov (2005) and
Jungers (2009, Section 1.2.2).

\begin{lem}[Invariance to convex hull]
For all $\lambda\in[0,1]$, $\rho(\lambda B_1 + (1-\lambda)
B_2)\le\rho(\{B_1,B_2\})$.  
\end{lem}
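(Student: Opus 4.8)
Write $\mathcal{A}=\{B_1,B_2\}$, set $C:=\lambda B_1+(1-\lambda)B_2$, and fix once and for all a submultiplicative matrix norm $\|\cdot\|$. The plan is to control the powers of $C$ by the worst length-$k$ word in $B_1$ and $B_2$ and then let $k\to\infty$: the left-hand side will converge to $\rho(C)$ by Gelfand's formula $\rho(C)=\lim_k\|C^k\|^{1/k}$ for a single matrix, while the right-hand side will converge to $\rho(\mathcal{A})$ by the characterization of the joint spectral radius through matrix norms.

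First I would expand the $k$-th power, keeping in mind that $B_1$ and $B_2$ need not commute:
\[
C^{k}=\sum_{w\in\{1,2\}^{k}} c_{w}\,\Pi_{w},\qquad
\Pi_{w}:=B_{w_{1}}\cdots B_{w_{k}},\qquad
c_{w}:=\lambda^{\#\{t:\,w_{t}=1\}}\,(1-\lambda)^{\#\{t:\,w_{t}=2\}}.
\]
The coefficients $c_{w}$ are nonnegative and sum to $(\lambda+(1-\lambda))^{k}=1$, so they form a probability distribution over the at most $2^{k}$ words of length $k$. Hence, by the triangle inequality and the fact that a convex combination of nonnegative numbers is at most their maximum,
\[
\|C^{k}\|\ \le\ \sum_{w}c_{w}\,\|\Pi_{w}\|\ \le\ \max_{w\in\{1,2\}^{k}}\|\Pi_{w}\|\ =\ \sup_{A\in\mathcal{A}^{k}}\|A\|.
\]
Taking $k$-th roots and sending $k\to\infty$, the left side tends to $\rho(C)$ and the right side tends to $\rho(\mathcal{A})$, so $\rho(C)\le\rho(\mathcal{A})=\rho(\{B_1,B_2\})$, which is the claim.

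The one step that is not elementary is the limit on the right, and it is the step I expect to be the main obstacle in a self-contained treatment: the identity $\lim_{k}\big(\sup_{A\in\mathcal{A}^{k}}\|A\|\big)^{1/k}=\rho(\mathcal{A})$, valid for every submultiplicative norm, i.e.\ the reconciliation of the norm-based and spectral-radius-based definitions of the joint spectral radius (the Rota--Strang / Berger--Wang equality). For the present purposes this is a standard theorem, contained in the references already cited. An equivalent and equally short route uses the same fact in its ``dual'' form: for every $\varepsilon>0$ there is a submultiplicative norm $\|\cdot\|_{\varepsilon}$ with $\max_{i}\|B_{i}\|_{\varepsilon}\le\rho(\mathcal{A})+\varepsilon$; then $\|C\|_{\varepsilon}\le\lambda\|B_{1}\|_{\varepsilon}+(1-\lambda)\|B_{2}\|_{\varepsilon}\le\rho(\mathcal{A})+\varepsilon$, whence $\rho(C)\le\|C\|_{\varepsilon}\le\rho(\mathcal{A})+\varepsilon$, and letting $\varepsilon\downarrow0$ finishes the proof.
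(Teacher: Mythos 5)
Your argument is correct, and it is worth noting that the paper itself does not prove this lemma at all: it simply points to Protasov (1996), Blondel and Nesterov (2005) and Jungers (2009, Section 1.2.2), so your write-up supplies an actual proof where the paper only supplies citations. Your word expansion of $C^k=(\lambda B_1+(1-\lambda)B_2)^k$, the observation that the coefficients $c_w$ form a probability distribution, and the bound $\|C^k\|\le\sup_{A\in\mathcal{A}^k}\|A\|$ followed by Gelfand's formula on the left are all sound; the alternative route via an extremal-type norm with $\max_i\|B_i\|_\varepsilon\le\rho(\mathcal{A})+\varepsilon$ is the standard Rota--Strang argument and is equally valid. You are also right to single out the one genuinely non-elementary ingredient: the paper defines $\rho(\mathcal{A})$ through spectral radii of products (the generalized spectral radius of Daubechies--Lagarias), whereas your bound naturally produces the norm-based quantity $\lim_k\bigl(\sup_{A\in\mathcal{A}^k}\|A\|\bigr)^{1/k}$, and passing from the latter to the former requires the Berger--Wang equality for finite (bounded) families — trivially one only gets ``generalized $\le$ norm-based'', so the nontrivial direction is really used. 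Since that equality is proved in the very references the paper cites (e.g.\ Jungers 2009), invoking it is legitimate, and with that citation your proof is complete; the trade-off is that the paper's approach keeps the text short by outsourcing everything, while yours makes transparent exactly which deep fact the innocuous-looking convex-hull inequality rests on.
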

In particular, $\max(\rho(B_1),\rho(B_2))\le\rho(\{B_1,B_2\})$, which gives a
lower bound for the JSR which is easy to calculate. Also,
\begin{cor}[Necessary condition]
A
necessary condition for Assumption R$'$ is that all eigenvalues must be less than $1$ in absolute value.
\end{cor}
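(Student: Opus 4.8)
\emph{Proof plan.} The plan is to read the corollary off the preceding lemma (Invariance to convex hull), since that lemma already packages the essential content. First I would instantiate the lemma at the two endpoints of the interval $[0,1]$: taking $\lambda=1$ gives $\rho(B_1)=\rho(1\cdot B_1+0\cdot B_2)\le\rho(\{B_1,B_2\})$, and taking $\lambda=0$ gives $\rho(B_2)\le\rho(\{B_1,B_2\})$, so that $\max(\rho(B_1),\rho(B_2))\le\rho(\{B_1,B_2\})$ (the inequality already displayed in the text). Then I would invoke Assumption R$'$: if $\rho(\{B_1,B_2\})<1$, then both $\rho(B_1)<1$ and $\rho(B_2)<1$, and by the definition of the spectral radius this means precisely that every eigenvalue of $B_1$ and every eigenvalue of $B_2$ has modulus strictly below $1$, which is the asserted necessary condition.

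If one wants the condition in terms of the original coefficient matrices, I would add a remark identifying $B_2$ as the companion matrix of the linear VAR with coefficients $\Phi_1,\dots,\Phi_p$ and $B_1$ as the companion matrix of the linear VAR with coefficients $\Phi_1+\Psi_1,\dots,\Phi_p+\Psi_p$. Then $\rho(B_2)<1$ and $\rho(B_1)<1$ are exactly the usual stability conditions, namely $\det\bigl(I_n-\sum_{j=1}^p\Phi_j z^j\bigr)\ne 0$ and $\det\bigl(I_n-\sum_{j=1}^p(\Phi_j+\Psi_j)z^j\bigr)\ne 0$ for all $z$ with $|z|\le 1$; so the corollary says that both constituent linear autoregressions must themselves be stable.

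There is essentially no obstacle here: the only input is the preceding lemma, whose proof is cited to Protasov (1996), Blondel and Nesterov (2005), and Jungers (2009, Section~1.2.2). The single point worth a sentence of care is to state explicitly what ``all eigenvalues'' refers to --- the eigenvalues of the two companion matrices $B_1$ and $B_2$ (equivalently, the reciprocals of the characteristic roots of the two constituent VARs) --- and to note that the bound $\max(\rho(B_1),\rho(B_2))\le\rho(\{B_1,B_2\})$ is what upgrades this from a mere implication to a genuine necessary condition.
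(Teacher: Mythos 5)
Your argument is correct and is exactly the paper's route: the corollary is read off the convex-hull lemma at $\lambda=0$ and $\lambda=1$, giving $\max(\rho(B_1),\rho(B_2))\le\rho(\{B_1,B_2\})<1$ under Assumption R$'$, hence all eigenvalues of $B_1$ and $B_2$ lie strictly inside the unit circle. Your added remark identifying $B_1$ and $B_2$ as companion matrices of the two constituent linear VARs is a harmless clarification beyond what the paper states.
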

This condition is not sufficient for Assumption R$'$.
Liebscher (2005), page 676, provides a simple example of a process, for which
all eigenvalues are less than $1$ in absolute value, but the JSR is greater than $1$
and, moreover, the process is not ergodic.

\begin{lem}[Nonnegative entries]
For matrices with nonnegative entries the joint spectral radius satisfies
$\rho(B_1 +B_2)/2\le\rho(\{B_1,B_2\})\le\rho(\{B_1+B_2\})$.  
\end{lem}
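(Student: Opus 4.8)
The plan is to pass from the spectral‑radius definition of the joint spectral radius to its equivalent characterisation through matrix norms. Recall (see Jungers (2009)) that for \emph{any} submultiplicative matrix norm $\|\cdot\|$ one has
\[
\rho(\mathcal{A})=\lim_{j\to\infty}\Bigl(\sup_{A\in\mathcal{A}^{j}}\|A\|\Bigr)^{1/j},
\]
the limit existing and coinciding with the value given in the definition above. I would fix a norm that is \emph{monotone} on the cone of entrywise‑nonnegative matrices, for instance the maximum‑row‑sum operator norm $\|A\|=\max_{i}\sum_{k}|A_{ik}|$, for which $0\le A\le C$ entrywise implies $\|A\|\le\|C\|$. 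The second ingredient is the elementary expansion
\[
(B_1+B_2)^{j}=\sum_{w\in\{1,2\}^{j}}B_{w_1}B_{w_2}\cdots B_{w_j},
\]
a sum of $2^{j}$ nonnegative matrices, each of which is a generic element of $\mathcal{A}^{j}$ with $\mathcal{A}=\{B_1,B_2\}$.

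\emph{Upper bound.} Each summand $B_{w_1}\cdots B_{w_j}$ is entrywise dominated by the whole sum $(B_1+B_2)^{j}$, so monotonicity of the chosen norm gives $\sup_{A\in\mathcal{A}^{j}}\|A\|\le\|(B_1+B_2)^{j}\|$. Raising to the power $1/j$ and letting $j\to\infty$, the left‑hand side converges to $\rho(\{B_1,B_2\})$ and the right‑hand side to $\rho(B_1+B_2)$ by Gelfand's formula; since the joint spectral radius of the singleton $\{B_1+B_2\}$ is exactly $\rho(B_1+B_2)$, this is the asserted inequality $\rho(\{B_1,B_2\})\le\rho(\{B_1+B_2\})$.

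\emph{Lower bound.} Here only the triangle inequality is needed: the expansion above yields $\|(B_1+B_2)^{j}\|\le\sum_{w}\|B_{w_1}\cdots B_{w_j}\|\le 2^{j}\sup_{A\in\mathcal{A}^{j}}\|A\|$. Taking $j$‑th roots and letting $j\to\infty$ gives $\rho(B_1+B_2)\le 2\,\rho(\{B_1,B_2\})$, i.e.\ $\rho(B_1+B_2)/2\le\rho(\{B_1,B_2\})$. Note that this half uses neither nonnegativity nor monotonicity, so it in fact holds for arbitrary matrices; the hypothesis $B_1,B_2\ge 0$ is consumed only in the upper bound, through the monotonicity of the norm (equivalently, through Perron--Frobenius monotonicity of the spectral radius on nonnegative matrices). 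I do not anticipate a genuine obstacle here: the only delicate point is the initial reduction to the normed characterisation of the joint spectral radius, and within it the existence of a submultiplicative norm that is monotone on the nonnegative cone — everything else is bookkeeping with the expansion of $(B_1+B_2)^j$.
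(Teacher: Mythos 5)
Your argument is correct: the entrywise domination of each word product by the expansion of $(B_1+B_2)^j$, combined with a monotone submultiplicative norm (and, for the lower bound, the triangle inequality and the norm characterisation of the joint spectral radius from Jungers (2009)), gives both inequalities, and this is essentially the standard proof in the sources the paper itself cites for this lemma (Protasov (1996), Blondel and Nesterov (2005), Jungers (2009, Section 1.2.2)), since the paper offers no proof of its own. As a minor remark, the lower bound also follows immediately from the paper's Lemma 1 with $\lambda=1/2$ and homogeneity of the spectral radius, which avoids invoking the norm-based characterisation for that half.
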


\begin{lem}[Invariance under linear bijections]
For any invertible matrix $T$,
$\rho(\{B_1,B_2\})=\rho(\{TB_1 T^{-1},TB_2 T^{-1}\})$.  
\end{lem}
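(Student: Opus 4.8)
The plan is to exploit the fact that conjugation by a fixed invertible matrix is a semigroup isomorphism on matrix products and leaves the spectrum — hence the spectral radius — unchanged, so that it acts as a bijection between the length-$j$ product sets of the two families and preserves the quantity being supremized in the definition of the JSR.

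First I would record the telescoping identity: for any $j\ge 1$ and any choice of indices $i_1,\dots,i_j\in\{1,2\}$, inserting $T^{-1}T$ between consecutive factors gives
\begin{equation*}
T\bigl(B_{i_1}B_{i_2}\cdots B_{i_j}\bigr)T^{-1}
=\bigl(TB_{i_1}T^{-1}\bigr)\bigl(TB_{i_2}T^{-1}\bigr)\cdots\bigl(TB_{i_j}T^{-1}\bigr).
\end{equation*}
Hence the length-$j$ product set of $\{TB_1T^{-1},TB_2T^{-1}\}$ equals $\{\,TAT^{-1}:A\in\{B_1,B_2\}^j\,\}$, and the map $A\mapsto TAT^{-1}$ is a bijection between $\{B_1,B_2\}^j$ and $\{TB_1T^{-1},TB_2T^{-1}\}^j$. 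Next, since $TAT^{-1}$ and $A$ are similar, they share the same characteristic polynomial and thus the same eigenvalues, so $\rho(TAT^{-1})=\rho(A)$. Combining the bijection with this invariance yields, for every $j\ge 1$,
\begin{equation*}
\sup_{A'\in\{TB_1T^{-1},TB_2T^{-1}\}^j}\rho(A')=\sup_{A\in\{B_1,B_2\}^j}\rho(A).
\end{equation*}
Raising both sides to the power $1/j$ and taking $\limsup_{j\to\infty}$ gives $\rho(\{TB_1T^{-1},TB_2T^{-1}\})=\rho(\{B_1,B_2\})$, which is the claim; the identical argument works verbatim for any finite collection of matrices.

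I do not expect any genuine obstacle here: the only points needing a line of care are the telescoping identity and the observation that conjugation restricts to a bijection on the length-$j$ product sets. I would also note in passing that an alternative route goes through the equivalent norm characterization $\rho(\mathcal{A})=\lim_{j\to\infty}(\sup_{A\in\mathcal{A}^j}\|A\|)^{1/j}$ applied to the conjugated submultiplicative norm $\|X\|_T:=\|T^{-1}XT\|$, but the eigenvalue argument above is shorter and relies only on the definition of the JSR given in the text.
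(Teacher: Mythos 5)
Your proof is correct, and it is worth pointing out that the paper itself offers no proof of this lemma: it is stated without argument and the reader is referred to Protasov (1996), Blondel and Nesterov (2005) and Jungers (2009, Section 1.2.2). Your self-contained verification works directly from the definition of the JSR adopted in the text: the telescoping identity $T\bigl(B_{i_1}\cdots B_{i_j}\bigr)T^{-1}=\bigl(TB_{i_1}T^{-1}\bigr)\cdots\bigl(TB_{i_j}T^{-1}\bigr)$ shows that conjugation maps $\{B_1,B_2\}^j$ onto $\{TB_1T^{-1},TB_2T^{-1}\}^j$, similarity preserves eigenvalues and hence spectral radii, so the suprema agree for every $j$ and the $\limsup$'s coincide; this is exactly the standard argument found in the cited sources, so nothing is lost by giving it explicitly. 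The only subtlety worth flagging is definitional: the formula in the paper (supremum of spectral radii of products) is, strictly speaking, the generalized spectral radius, while the references often state invariance for the norm-based joint spectral radius; your eigenvalue argument settles the former, your parenthetical remark about the conjugated submultiplicative norm $\|X\|_T=\|T^{-1}XT\|$ (or simply the fact that the norm-based limit does not depend on the choice of norm) settles the latter, and the two notions agree for finite sets of matrices by the Berger--Wang theorem, so your proof covers the lemma as used in the paper.
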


Invariance under linear bijections is useful because transformations help to
reduce the dimension of the problem.
If the condition of the following lemma is satisfied, the set of matrices is
called \emph{reducible} and its JSR can be calculated from the JSR of smaller
matrices.
\begin{lem}[Reducibility]
If there exists an invertible matrix $T$ and square matrices of $B_{1,1}$ and
$B_{2,1}$ of equal dimensions, such that
\begin{align*} TB_1 T^{-1} =
 \begin{pmatrix}
     B_{1,1} & B_{1,0}   \\
     0 & B_{1,2}   \\
          \end{pmatrix}
\quad \text{and} \quad TB_2 T^{-1} =
 \begin{pmatrix}
     B_{2,1} & B_{2,0}   \\
     0 & B_{2,2}   \\
          \end{pmatrix},
\end{align*}
then 
$\rho(\{B_1,B_2\})=\max\rho(\{B_{1,1},B_{2,1}\}) ,\rho(\{B_{1,2},B_{2,2}\})$.  
\end{lem}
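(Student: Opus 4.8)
The plan is to start from the preceding Lemma on invariance under linear bijections. Since $\rho(\{B_1,B_2\})=\rho(\{TB_1T^{-1},TB_2T^{-1}\})$, I may replace $B_1,B_2$ by their conjugates and so assume from the outset that $B_1$ and $B_2$ are themselves block upper triangular with diagonal blocks $B_{1,1},B_{1,2}$ and $B_{2,1},B_{2,2}$ respectively. It then suffices to prove the identity in this normalized situation.

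The core observation is that block upper triangular matrices with a fixed partition are closed under multiplication, with the diagonal blocks of a product equal to the products of the corresponding diagonal blocks. Hence any word $A=A_{i_1}\cdots A_{i_j}$ with each $A_{i_k}\in\{B_1,B_2\}$ is block upper triangular, its top-left block being the corresponding product of $B_{1,1}$'s and $B_{2,1}$'s, its bottom-right block the corresponding product of $B_{1,2}$'s and $B_{2,2}$'s. As $A$ ranges over $\{B_1,B_2\}^j$ these two blocks range over all of $\{B_{1,1},B_{2,1}\}^j$ and all of $\{B_{1,2},B_{2,2}\}^j$ (they move together word by word, but this is irrelevant for what follows). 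Since the eigenvalues of a block triangular matrix are exactly those of its diagonal blocks, $\rho(A)=\max(\rho(C),\rho(D))$ for the associated blocks $C,D$, and taking suprema over all length-$j$ words gives
\[
\sup_{A\in\{B_1,B_2\}^j}\rho(A)=\max\!\left(\sup_{C\in\{B_{1,1},B_{2,1}\}^j}\rho(C),\ \sup_{D\in\{B_{1,2},B_{2,2}\}^j}\rho(D)\right),
\]
because the supremum of a pointwise maximum over a common index set is the maximum of the two suprema.

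To finish, I would raise both sides to the power $1/j$ and take $\limsup_{j\to\infty}$; by the definition of the JSR the left-hand side produces $\rho(\{B_1,B_2\})$, while the right-hand side, using $\limsup_j\max(a_j,b_j)=\max(\limsup_j a_j,\limsup_j b_j)$, produces $\max(\rho(\{B_{1,1},B_{2,1}\}),\rho(\{B_{1,2},B_{2,2}\}))$, which is the claim. I do not expect a genuine obstacle: the argument is bookkeeping about products of block triangular matrices together with two facts already used in the surrounding lemmas (invariance of the JSR under simultaneous conjugation, and that the spectrum of a block triangular matrix is the union of the spectra of its diagonal blocks). The only mildly delicate point is the interchange of $\limsup$ and $\max$ in the last step, which I would justify by passing to a subsequence along which the maximum is attained by a fixed one of the two sequences infinitely often.
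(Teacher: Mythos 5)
Your argument is correct, and it is worth noting that the paper itself does not prove this lemma at all: it simply points to Protasov (1996), Blondel--Nesterov (2005), and Jungers (2009, Section 1.2.2). So what you have written is a genuinely self-contained alternative. Its key feature is that it works directly with the paper's definition of the JSR as $\limsup_j\bigl(\sup_{A\in\mathcal{A}^j}\rho(A)\bigr)^{1/j}$, i.e.\ the \emph{generalized} spectral radius: since eigenvalues of a block upper triangular matrix are exactly those of its diagonal blocks, the off-diagonal block of a long product is invisible to $\rho(A)$, and the whole proof reduces to the bookkeeping you describe (diagonal blocks of a product are products of diagonal blocks, sup of a pointwise max equals max of sups over the common index set of words, and $\limsup_j\max(a_j,b_j)=\max(\limsup_j a_j,\limsup_j b_j)$, which your subsequence remark handles). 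The standard proofs in the cited literature instead use the norm-based definition of the JSR, where the inequality $\rho(\{B_1,B_2\})\le\max(\rho(\{B_{1,1},B_{2,1}\}),\rho(\{B_{1,2},B_{2,2}\}))$ is the delicate direction: one must control the norm of the off-diagonal block of a length-$j$ product, which is a sum of $j$ cross terms, typically via a rescaling/$\epsilon$-perturbation argument yielding a polynomial factor times $(\max\rho+\epsilon)^j$. Your route avoids that entirely, at the price of being a statement about the generalized spectral radius; it transfers to the norm-based JSR (the quantity the cited algorithms and the toolbox actually bound) only via the Berger--Wang theorem, which asserts the two coincide for finite families. Since the paper adopts the spectral-radius definition, your proof is complete as a proof of the lemma as stated; if you want it to match the references' formulation, say explicitly that you are invoking Berger--Wang or add the off-diagonal estimate.
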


\section{Numerical Illustration}\label{sec:ex}

A bivariate LSTAR model for joint movement of output growth  ($y_{t1}$) and the
interest rate spread  ($y_{t2}$) in the UK is suggested by Anderson,
Athanasopoulos, and Vahid (2007).  In particular, the conditional mean of
$y_t$, $\mu_t=\left(\mu_{t1},\mu_{t2}\right)'$,  as a function of unknown
parameters $b=\left(b_1,\ldots,b_{11}\right)'$ is
\begin{eqnarray*}
\mu_{t1} &=&b_1+b_2 y_{t-2,1} + b_3 y_{t-3,1} + b_4 y_{t-1,2} +w_t \left(-b_5 y_{t-1,2}\right), \\
\mu_{t2} &=& b_8+b_9 y_{t-3,1} + b_{10} y_{t-1,2}+ b_{11} y_{t-2,2}, \\
\text{where } w_{t} &=& \left(1+\exp\left(-b_6(y_{t-2,1} - b_7)\right)\right)^{-1}.
\end{eqnarray*}%
The logistic smooth transition autoregressive specification for output growth incorporates different regimes and smooth transitions between them.

Anderson, Athanasopoulos, and Vahid (2007) estimated the model by Maximum
Likelihood (ML), while Kheifets (2018) propose tests of the
following null hypothesis
\begin{equation}\label{eq:H0NAR}
H_{0}:y_{t}|\mathcal{I}_t\sim N(\mu_{t} ,\Sigma ),
\end{equation}
where $\Sigma$  is a nonrandom
positive definite covariance matrix and
$\mathcal{I}_t$ is the information set generated by $y_{t-j}, j = 1,2,\ldots$.

Both papers rely on stationarity and ergodicity of the bivariate series.
We assume that $y_t=\mu_t + \varepsilon_t$, where random vectors
$\varepsilon_t$ are i.i.d.\ with zero mean and any positive definite covariance matrix with a density bounded away from zero on
compact subset of $R^n$. Then, in our notation, $\gamma=b_6$, $c= b_7$ and
$s_t=y_{t-2,1}$, and the system has stationary solution if Assumption R$'$  holds
for the following matrices:

\begin{align*}
B_1 =
 \begin{pmatrix}
 0   &b_4-b_5 & b_2   & 0      & b_3  & 0   \\
 0   & b_{10} & 0     & b_{11} & b_9  & 0     \\
 1   & 0      & 0     & 0      & 0    & 0     \\
 0   & 1      & 0     & 0      & 0    & 0     \\
 0   & 0      & 1     & 0      & 0    & 0     \\
 0   & 0      & 0     & 1      & 0    & 0     \\
          \end{pmatrix}
\end{align*}

\begin{align*}
B_2 =
 \begin{pmatrix}
 0   & b_4    & b_2   & 0      & b_3  & 0   \\
 0   & b_{10} & 0     & b_{11} & b_9  & 0     \\
 1   & 0      & 0     & 0      & 0    & 0     \\
 0   & 1      & 0     & 0      & 0    & 0     \\
 0   & 0      & 1     & 0      & 0    & 0     \\
 0   & 0      & 0     & 1      & 0    & 0     \\
          \end{pmatrix}
\end{align*}

The sample used by Anderson, Athanasopoulos, and Vahid (2007) consists of $159$ quarterly time series observations,
dating from 1960:3 to 1999:4, see Figure~\ref{fig:data}. The data is available at
\url{http://qed.econ.queensu.ca/jae/2007-v22.1/anderson-athanasopoulos-vahid/}.
Output growth is calculated as $100$ $\times$ the difference of
logarithms of seasonally adjusted real DGP, and the spread is the
difference between the interest rates on 10 Year Government Bonds and 3 Month
Treasury Bills. 

\begin{figure}[!t]
\centering 
  \includegraphics[width=0.50\textwidth]{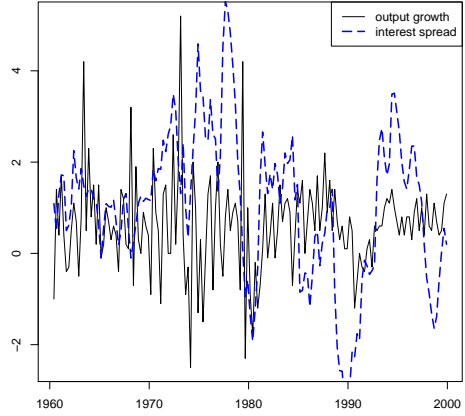}
\caption{UK output growth and interest spread, quarterly data from 1960:3 to
1999:4} \label{fig:data}
\end{figure}

Estimating by ML, we obtain the following coefficients $$\hat b = (0.35, 0.21, 0.15, 0.32, −0.52, 2.56, 0.68,
0.20, −0.14, 1.14, −0.26)′,$$
and standard errors $$
s.e. (\hat b) = (0.09, 0.09, 0.09, 0.11, 0.15, 0.29, 0.12, 0.09, 0.10, 0.10, 0.10)
$$ (calculated using residual parametric bootstrap with $10000$ draws)
and construct corresponding
matrices $\hat B_1$ and $\hat B_2$. 
Imposing stationary initial values in the estimation is not feasible because for
the considered LSTAR model (as well as most other nonlinear (V)AR models) the
stationary distribution is not known. Thus, the ML estimation employed is
necessarily conditional on fixed (and known) initial values. 

The maximum eigenvalues of $\hat B_1$ and $\hat B_2$ are $0.9216$
and $0.8236$, both below $1$, therefore the necessary condition of Assumption R$'$ is satisfied. 
To obtain bounds on the joint spectral radius we use the JSR toolbox written in
Matlab by Raphael Jungers, freely downloadable (with documentation and demos)
from Matlab Central
(\url{https://www.mathworks.com/matlabcentral/fileexchange/33202-the-jsr-toolbox})
The toolbox is described in
Vankeerberghen, Hendrickx and Jungers (2014). In our case, the toolbox provides
the upper and lower bounds which coincide up to the 4th digit and the value is
$0.9216$, calculated in 9 iterations in 31 seconds. Therefore, Assumption R$'$ is
satisfied. 

Notice that the last column of both matrices consists of zeros. That means that
the set $\{B_1,B_2\}$ is reducible to $\{B_{1,1},B_{2,1}\}$ (the other two
matrices are $1$ by $1$ zeros):
\begin{align*}
B_{1,1} =
 \begin{pmatrix}
 0   &b_4-b_5 & b_2   & 0      & b_3   \\
 0   & b_{10} & 0     & b_{11} & b_9     \\
 1   & 0      & 0     & 0      & 0       \\
 0   & 1      & 0     & 0      & 0       \\
 0   & 0      & 1     & 0      & 0       \\
          \end{pmatrix}
\end{align*}

\begin{align*}
B_{2,1} =
 \begin{pmatrix}
 0   & b_4    & b_2   & 0      & b_3   \\
 0   & b_{10} & 0     & b_{11} & b_9     \\
 1   & 0      & 0     & 0      & 0       \\
 0   & 1      & 0     & 0      & 0       \\
 0   & 0      & 1     & 0      & 0       \\
          \end{pmatrix}
\end{align*}
so $\rho\{B_1,B_2\}=\rho\{B_{1,1},B_{2,1}\}$ by Lemma 3 and 4. The JSR toolbox performs this
reduction.

\section{Main Result}\label{sec:main}
We now state our general result for a model with $g+1$ regimes and
heteroskedasticic errors,
\begin{align}\label{eq:hety}
\begin{split}
y_t=&
\mu_0 + \sum_{j=1}^p \Phi_j y_{t-j}+\sum_{i=1}^g G_i(\gamma,c;s_t) \left\{
\mu_{i} + \sum_{j=1}^p \Psi_{i,j} y_{t-j} \right\}\\
&+\left\{\left(1-\sum_{i=1}^g G_i(\gamma,c;s_t)\right)\Omega_{0}
+\sum_{i=1}^g G_i(\gamma,c;s_t)\Omega_i\right\}^{1/2}
\varepsilon_t,
\end{split}
\end{align}
where  $y_t$ and $\varepsilon_t$ are $n\times 1$ random vectors, $\mu_0$ and
$\mu_{i}$ are $n\times 1$ intercept vectors, $\Phi_j$ and $\Psi_{i,j}$,
$j=1,\ldots,p$, are $n\times n$ parameter
matrices, and $\Omega_i$ is positive definite and $i=1,\ldots,g$.  We assume
that random vectors $\varepsilon_t$ are i.i.d.$(0,I_n)$ with a density bounded
away from zero on compact subsets of $R^n$.
If regime switches are only in conditional means,
i.e.~variances coincide in all regimes, $\Omega_0=\Omega_i$ for all
$i=1,\ldots,g$, then the last term is simplified to $\Omega_0^{1/2}\varepsilon_t$.
The transition function is common for all components of vector $y_t$, and it is a
continuous function of random variable  $s_t$ and parameters $\gamma$ and $c$
and takes values on $[0,1]$. Moreover, $G_1 + ... + G_g  \le  1$
and $s_t$ is a function of $\{y_{t-j}, j=1,\ldots,p\}$ as before.

For $i=1,\ldots,g$ define matrices 
\begin{align*}
B_{i} =
 \begin{pmatrix}
   \Phi_1 + \Psi_{i,1} & \Phi_{2} + \Psi_{i,2} & \cdots & \Phi_{p-1} + \Psi_{i,p-1} & \Phi_p + \Psi_p \\
     I_n & 0  & \cdots & 0 & 0 \\
     0 & I_n  & \cdots & 0 & 0 \\
       \vdots  & \vdots  & \ddots & \vdots & \vdots \\
         0 & 0 & \cdots & I_n  & 0
          \end{pmatrix}
\end{align*}
\begin{align*}
B_{g+1} =
 \begin{pmatrix}
   \Phi_1  & \Phi_2 & \cdots & \Phi_{p-1} & \Phi_p  \\
     I_n & 0  & \cdots & 0 & 0 \\
     0 & I_n  & \cdots & 0 & 0 \\
       \vdots  & \vdots  & \ddots & \vdots & \vdots \\
         0 & 0 & \cdots & I_n  & 0
          \end{pmatrix}
\end{align*}

\textbf{Assumption R}
The joint spectral radius of the matrices defined above is less than $1$, i.e.\
$\rho(\{B_1,\ldots,B_g, B_{g+1}\})<1$.

We will use the Markov chain theory on which Mayen and Tweedie (1993) is a standard
reference. In particular, for geometric ergodicity of a Markov chain see Meyn
and Tweedie (1993), Chapter 15.

\begin{theorem}\label{thm:main} Suppose that Assumption R is satisfied.  Then
the process $Z_t=\{y'_t,\ldots,y'_{t-p+1}\}$ is a $(1+\|x\|^2)$-geometrically
ergodic Markov chain.  Therefore, there exist initial values
$y'_{t-p+1},\ldots,y_0'$ such that the process $y_t$ defined in
(\ref{eq:hety}) is 1) strictly stationary 2) second-order stationary 3)
$\beta$-mixing with geometrically decaying mixing numbers.  \end{theorem}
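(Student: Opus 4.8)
The plan is to realize $Z_t=(y_t',\dots,y_{t-p+1}')'$ as a Markov chain on $\mathbb{R}^{np}$ in companion form and to verify the standard criterion for $V$-geometric ergodicity (Meyn and Tweedie, 1993, Chapter~15): $\psi$-irreducibility, aperiodicity, compact sets being small, and a geometric drift condition with $V(x)=1+\|x\|^2$. Assumption R is used only to produce the drift; the rest is routine. Since $s_t$ is a function of $y_{t-1},\dots,y_{t-p}$ it is $\sigma(Z_{t-1})$-measurable, and collecting the $y_{t-j}$ terms in (\ref{eq:hety}) shows $y_t=c(s_t)+\sum_{j=1}^p\big(\Phi_j+\sum_{i=1}^gG_i(s_t)\Psi_{i,j}\big)y_{t-j}+\Sigma(s_t)^{1/2}\varepsilon_t$, where $c(s)=\mu_0+\sum_iG_i(s)\mu_i$ and $\Sigma(s)=(1-\sum_iG_i(s))\Omega_0+\sum_iG_i(s)\Omega_i$. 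In companion form $Z_t=\Phi(s_t)Z_{t-1}+\xi_t$ with $\xi_t=\big((c(s_t)+\Sigma(s_t)^{1/2}\varepsilon_t)',0,\dots,0\big)'$ and, by the way the $B_i$ are defined, $\Phi(s)=\sum_{i=1}^gG_i(s)B_i+\big(1-\sum_{i=1}^gG_i(s)\big)B_{g+1}$, a convex combination of $B_1,\dots,B_{g+1}$ for every value of $s$. Because the $G_i$ lie in $[0,1]$ with $\sum_iG_i\le1$ and the $\Omega_i$ form a fixed finite family of positive definite matrices, there are constants $0<\underline\sigma\le\bar\sigma<\infty$ and $M<\infty$ with $\underline\sigma I_n\preceq\Sigma(s)\preceq\bar\sigma I_n$ and $\|c(s)\|\le M$ for all $s$.

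Next I would establish irreducibility, aperiodicity and small sets. Conditionally on $Z_{t-1}=x$, the vector $y_t$ has density $y\mapsto|\det\Sigma(s)^{-1/2}|\,f_\varepsilon\big(\Sigma(s)^{-1/2}(y-\Phi(s)x-c(s))\big)$; since the eigenvalues of $\Sigma(s)^{-1/2}$ lie in $[\bar\sigma^{-1/2},\underline\sigma^{-1/2}]$ and $f_\varepsilon$ is bounded away from zero on compacts, this density is bounded away from zero for $(x,y)$ in any compact set. Iterating $p$ steps and using the chain rule (all intermediate $s$, $\Phi$, $\Sigma$, $c$ staying in bounded ranges), $Z_{t+p-1}$ given $Z_{t-1}$ has a density on $\mathbb{R}^{np}$ that is likewise bounded below on compact sets. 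Hence $Z_t$ is $\psi$-irreducible with respect to Lebesgue measure, aperiodic, and every compact set --- in particular every sublevel set $\{x:V(x)\le R\}$ --- is small. This is the by-now-standard argument; cf.\ the references on regime-switching models cited in the introduction.

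Now the drift, where Assumption R enters. Since $\rho(\{B_1,\dots,B_{g+1}\})<1$, by the norm characterization of the joint spectral radius (Jungers, 2009) there is a vector norm $\|\cdot\|_*$ on $\mathbb{R}^{np}$ whose induced operator norm satisfies $\|B_i\|_*\le\gamma$ for all $i$, with $\gamma:=\tfrac12\big(1+\rho(\{B_1,\dots,B_{g+1}\})\big)<1$. For every $s$, convexity of the weights and the triangle inequality give $\|\Phi(s)z\|_*\le\sum_i\lambda_i(s)\|B_iz\|_*\le\gamma\|z\|_*$, i.e.\ $\Phi(s)$ is a $\gamma$-contraction in $\|\cdot\|_*$ uniformly in $s$. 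Let $c_1\|\cdot\|\le\|\cdot\|_*\le c_2\|\cdot\|$ and put $W(x)=1+\|x\|_*^2$, equivalent to $V(x)=1+\|x\|^2$. Using $(a+b)^2\le(1+\eta)a^2+(1+\eta^{-1})b^2$ with $\eta$ chosen so that $(1+\eta)\gamma^2=:\bar\lambda<1$, together with $\|\xi_t\|_*\le c_2(M+\bar\sigma\|\varepsilon_t\|)$ and $\E\|\varepsilon_t\|^2<\infty$, one obtains
\begin{align*}
\E\big[W(Z_t)\mid Z_{t-1}=x\big]\ \le\ 1+(1+\eta)\gamma^2\|x\|_*^2+(1+\eta^{-1})\,\E\|\xi_t\|_*^2\ \le\ \bar\lambda\,W(x)+b
\end{align*}
for a finite constant $b$, uniformly in $x$. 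Passing to a large sublevel set, $\E[W(Z_t)\mid Z_{t-1}=x]\le\bar\lambda'W(x)+b\,\mathbf{1}_{C}(x)$ for some $\bar\lambda'\in(\bar\lambda,1)$ and some compact (hence small) $C$. Combined with $\psi$-irreducibility and aperiodicity, this is the drift criterion for $W$-geometric ergodicity (Meyn and Tweedie, 1993, Thm.~15.0.1); by equivalence of $W$ and $V$, $Z_t$ is $(1+\|x\|^2)$-geometrically ergodic.

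Finally, the consequences: $V$-geometric ergodicity yields a unique invariant probability $\pi$ with $\int V\,d\pi<\infty$, so taking $Z_0\sim\pi$ makes $Z_t$ --- and its first block $y_t$ --- strictly stationary, which specifies the required initial values $y_{-p+1}',\dots,y_0'$; since $V(x)\ge\|x\|^2$ we get $\int\|x\|^2\,d\pi<\infty$, i.e.\ second-order stationarity. A stationary, $\psi$-irreducible, aperiodic, geometrically ergodic Markov chain is absolutely regular with geometrically decaying $\beta$-mixing coefficients (Liebscher, 2005), and because $Z_t$ is built from the current and past $y$'s while $y_t$ is a fixed coordinate projection of $Z_t$, the mixing coefficients of $(y_t)$ are dominated (up to the finite shift $p-1$) by those of $(Z_t)$; hence $y_t$ is $\beta$-mixing with geometric rate. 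The main obstacle is the drift step: one must recognize that Assumption R --- and not merely $\rho(B_i)<1$ for each $i$ separately --- is exactly what furnishes a single norm in which the state-dependent, convex-combination companion matrix $\Phi(s_t)$ contracts uniformly, and then handle the non-Euclidean norm in forming the quadratic Lyapunov function.
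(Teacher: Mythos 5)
Your argument is correct, but it takes a genuinely different route from the paper. The paper's proof is a reduction: it shows that model (\ref{eq:hety}) can be written as the nonlinear VAR in Equation (17) of Saikkonen (2008) (with $n=r$, $J$ nonsingular, $h_{g+1}=1-\sum_i G_i$, $h_i=G_i$, $\bar B_{g+1,j}=\Phi_j$, $\bar B_{ij}-\bar B_{g+1,j}=\Psi_{i,j}$), checks that Assumptions 1 and 2(a) there hold and that Assumption R is exactly Saikkonen's condition (19), and then invokes his Theorem 1, which already contains the Meyn--Tweedie Chapter 15 machinery. You instead re-prove the result from scratch: you put $Z_t$ in companion form, observe that the state-dependent companion matrix is the convex combination $\sum_i G_i(s)B_i+(1-\sum_i G_i(s))B_{g+1}$, extract from $\rho(\{B_1,\dots,B_{g+1}\})<1$ a Rota--Strang type norm with $\|B_i\|_*\le\gamma<1$ (so that the convex combination contracts uniformly --- the norm version of the paper's Lemma 1), build the quadratic drift for $W(x)=1+\|x\|_*^2$, and get irreducibility, aperiodicity and smallness of compacts from the $p$-step transition density being bounded below on compacts, before applying Theorem 15.0.1 directly and deducing stationarity, finite second moments and geometric $\beta$-mixing from the invariant distribution. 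Your route buys a self-contained argument that makes transparent exactly where Assumption R is used (it furnishes the single contraction norm, which separate spectral radii $\rho(B_i)<1$ would not); the paper's route buys brevity and delegates the delicate Markov-chain verifications (minorization, state-dependent covariance, the precise form of the drift) to an already published theorem. Two cosmetic points in your write-up: the bound on the noise should read $\|\Sigma(s)^{1/2}\varepsilon_t\|\le\bar\sigma^{1/2}\|\varepsilon_t\|$ rather than $\bar\sigma\|\varepsilon_t\|$, and you should say explicitly that the norm with $\max_i\|B_i\|_*\le\gamma=\tfrac12(1+\rho)$ is the $\epsilon$-extremal norm guaranteed for finite families whenever $\epsilon>0$ (Jungers, 2009, Prop.~1.4); neither affects the validity of the proof.
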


\begin{proof}
Saikkonen (2008) establishes stationarity and mixing properties of nonlinear
vector error correction (VEC) models under Assumption R using the theory of
Markov chains. In order to do so, he transforms the VEC model into a VAR model
which can be formulated as a Markov chain for which
Theorem 15.0.1 of Meyn and Tweedie (1993) can be applied. The main work is to
verify condition (15.3) and related assumptions of that theorem. As our model
is already a VAR model, it is sufficient to show that our vector STAR model can
be written in the form of the nonlinear VAR model in Equation (17) of Saikkonen
(2008) and that the assumptions needed in Theorem 1 of that paper hold true.

First assume that the number of unit
roots $n-r$ in Saikkonen's Assumption 3 is zero so that $n = r$ (in that
assumption $n$ has the same meaning as here). Then note that in the definition
of the matrix $J$ above Saikkonen's Equation (17)  we have (in addition to $n =
r$) $\beta = c = I_n$ and $z_t=y_t$ (see Saikkonen's Note 2). Thus, it follows that $J$ is a
nonsingular matrix of dimension $np$ and we can choose the matrix $S$ in
Equation (17) the inverse of $J'$. In Saikkonen's Equation (17) $h_1 + ... + h_m
= 1$ and we assume that, corresponding to Saikkonen's Assumption 2, the
functions $h_s$ are independent of the argument $\eta_t$. Then, with $m = g+1$
the first term on the right hand side of Equation (17) in Saikkonen (2008) is
\begin{equation}
\sum_{s=1}^{m}h_s\sum_{j=1}^p \bar B_{sj} y_{t-j} 
= \sum_{j=1}^p \bar B_{g+1,j} y_{t-j} 
+ \sum_{i=1}^g h_i\sum_{j=1}^p (\bar B_{ij}-\bar B_{g+1,j}) y_{t-j}.
\end{equation}
Substitute $h_m=1-\sum_{i=1}^g G_i$, $h_{i} = G_i$ and $\bar B_{g+1,j}=\Phi_j$
and $\bar B_{ij} - \bar B_{g+1,j}=\Psi_{i,j}$ to obtain dynamics in Equation (\ref{eq:hety})
without the intercept.  The second term, which is defined in Equation (11) in
Saikkonen (2008),
produces the intercept, because we can take $\rm{I}_2$ as an empty set of indexes.
Finally, the third term gives the required heteroskedastic errors, because
\begin{equation}
\sum_{s=1}^{m}h_s \Omega_{s} 
= \left(1-\sum_{=1}^g G_i\right) \Omega_{0} 
+ \sum_{i=1}^g G_i \Omega_{i}.
\end{equation}

From the preceding discussion and the fact that the random variable $s_t$ is a
function of $\{y_{t-j}, j=1,\ldots,p\}$ we can conclude that
Equation~(\ref{eq:hety}) is a special case of
Equation (17) of Saikkonen (2008). Therefore
Markov chain theory for the process $Z_t=\{y'_t,\ldots,y'_{t-p+1}\}$ can be
applied. The conditions assumed for the error term and transition functions
below Equation~(\ref{eq:hety}) imply that the conditions in Assumptions 1 and
2(a) of Saikkonen (2008) are satisfied whereas condition (b) in his Assumption
2 is dispensable. To see that Theorem 1 of Saikkonen (2008) implies the 
assertions stated in our theorem it now suffices to note that our Assumption R
corresponds to Saikkonen's (2008) condition (19) and his Equation (10) is
dispensable in our case.  \end{proof}

\section{Final Remarks}

In this paper we describe conditions for stationarity and ergodicity of
vector STAR models. The sufficient condition is that the joint spectral radius of
certain matrices is below $1$. This condition can be checked using recently
introduced toolboxes from computational mathematics.

For linear models, for example for the (causal) univariate autoregressive model of order $1$,
it is known that stationarity holds if the autoregressive coefficient
is in $(-1,1)$, otherwise the time series is nonstationary.  For the considered
vector
LSTAR model, the stationarity holds if the joint spectral radius is below $1$.
However, it is only a sufficient condition for stationarity and ergodicity.
Thus, if the joint spectral radius equals one or is larger than one, we can only
conclude that it is not possible to verify stationarity and ergodicity by using
the employed criterion and in principle it is possible that stationarity and
ergodicity still holds.  The conclusion is similar to that discussed above if
the value of the joint spectral radius based on estimates is smaller than one
but deemed to be so close to one that, due to estimation errors, it can be one
or even larger than one with high probability.

An interesting extension of the model considered here would be to add regressors
$x_t$, as in e.g.~Hubrich and Terasvirta  (2013) 
\begin{align}\label{eq:hetyx}
y_t=
\mu_0 + \sum_{j=1}^p \Phi_j y_{t-j} + \Gamma x_t
+ G(\gamma,c;s_t) \left\{
\mu_1 + \sum_{j=1}^p \Psi_j y_{t-j} + \Xi x_t 
\right\}
+\varepsilon_t,
\end{align}
where  $x_t$ are $k\times 1$ random vectors, $\Gamma$ and $\Xi$ are $n\times k$
parameter matrices.
Suppose that $x_t$ is an exogenous random vector. Then no results on stationarity
can hold true if $x_t$ is nonstationary, and even if $x_t$ is assumed stationary
and ergodic Theorem 1 in Saikkonen (2008) is not applicable because without
further assumptions model (8) cannot be cast into the required Markov chain
form.

\section{Acknowledgements}

Igor Kheifets gratefully acknowledges financial support 
from the Spanish Ministerio de Ciencia,
Innovacion y Universidades under Grant ECO2017-86009-P and
thanks the faculty and staff of the New Economic School for their hospitality
during his visits to Moscow.
Pentti Saikkonen thanks the Academy of Finland (grant number 1308628) for
financial support.

\renewcommand{\baselinestretch}{1.2}

\end{document}